\documentclass[twoside, 12pt]{amsart}
\usepackage{latexsym}
\usepackage{amssymb,amsmath,amsopn}
\usepackage[dvips]{graphicx}   
\usepackage{color,epsfig}      

\textwidth=16cm\textheight=24,5cm\parindent =0.5cm\parskip=3pt
\hoffset=-1,9cm \topmargin=-1,5cm

\newtheorem{thm}{Theorem}

\newtheorem{defn}{Definition}
\newtheorem{rem}{Remark}

\DeclareMathOperator{\grad}{grad}

\DeclareMathOperator{\inter}{int}

\renewcommand{\Re}{\mathbb R}

\begin{document}
\title[On global equilibria]{On global equilibria of finely discretized curves and surfaces}
\author[G. Domokos and Z. L\'angi]{G\'abor Domokos and Zsolt L\'angi}
\address{G\'abor Domokos, Dept. of Mechanics, Materials and Structures, Budapest University of Technology,
M\H uegyetem rakpart 1-3., Budapest, Hungary, 1111}
\email{domokos@iit.bme.hu}
\address{Zsolt L\'angi, Dept.\ of Geometry, Budapest University of
Technology and Economics, Budapest, Egry J\'ozsef u. 1., Hungary, 1111}
\email{zlangi@math.bme.hu}

\keywords{equilibrium, convex surface, Poincaré-Hopf formula, polyhedral approximation}
\subjclass[2010]{53A05, 53Z05}
\thanks{The authors gratefully acknowledge the support of the J\'anos Bolyai Research Scholarship of the Hungarian Academy of Sciences and support from OTKA grant 104601}

\begin{abstract}
In our earlier work \cite{DLS} we identified the types and numbers of static equilibrium points 
of solids arising from fine, equidistant $n$-discretrizations of smooth, convex surfaces.
We showed that such discretizations carry equilibrium points on two scales: the local scale corresponds to the discretization, the global scale to the original, smooth surface.
In \cite{DLS} we showed that as $n$ approaches  infinity, the number of local equilibria fluctuate around specific values which
we call the imaginary equilibrium indices associated with the approximated smooth surface.
Here we show how the number of global equilibria can be interpreted, defined
and computed on such discretizations. Our results are relevant from the point of view of natural pebble
surfaces, they admit a comparison between field data based on hand measurements and laboratory data based on 3D scans.
\end{abstract}

\maketitle

\section{Introduction}

Static equilibria of convex bodies correspond to the singularities of the
gradient vector field characterizing their surface.
The study of equilibria of rigid bodies is a classic chapter of mathematics and mechanics; 
initiated by Archimedes \cite{Archimedes1}, the theory was revived in modern times by the works
of Cayley \cite{Cayley} and Maxwell \cite{Maxwell} yielding results on
the global number of stationary points. Further generalization
by Poincar\'e and Hopf led to the Poincar\'e-Hopf Theorem \cite{Arnold} on topological invariants.
If applied to generic, convex bodies, represented by gradient fields defined on the sphere, this theorem 
states that the number $S$ of `sinks' (stable equilibria), the number $U$ of `sources' (unstable
equilibria) and the number $N$ of saddles always satisfy the equation
\begin{equation} \label{Poincare}
S+U-N=2.
\end{equation}
This formula, the so-called Poincar\'e-Hopf formula can be regarded as a generalization
of the well-known Euler's formula \cite{Euler}  for convex polyhedra.

We also mention results on polyhedra;
monostatic polyhedra (i.e. polyhedra with just $S=1$ stable equilibrium point) have been studied
in \cite{Heppes}, \cite{Conway},\cite{Dawson} and \cite{DawsonFinbow} and more recently in \cite{Reshetov}.

The total number $T$ of equlibria $(T= S+U+N)$ has also been in the focus of research.
In planar, homogeneous, convex bodies (rolling along their boundary
on a horizontal support), we have $T\geq 4$ \cite{Domokos1}.
However, convex homogeneous objects with $T=2$ exist in the three-dimensional space (cf. \cite{VarkonyiDomokos}).  Zamfirescu \cite{Zamfirescu} showed that
for \emph{typical} convex bodies, $T$ is infinite, suggesting that equilibria in abundant numbers may occur in physically relevant scenarios.

Natural pebbles exhibit similar behavior: their convex hull is a multi-faceted polyhedron $P$ carrying many static equilibria \cite{Domokosetal}
appearing in strongly localized \emph{flocks}. In \cite{DLS} we studied this phenomenon and showed that if $P$ is defined as a sufficiently dense,
equidistant discretization of a smooth surface $M$ then flocks are indeed strongly localized in the vicinity of (isolated) equilibria of $M$
and the number, type and geometrical arrangement of equilibrium points \emph{inside} any single flock can be
expressed by the principal curvatures and the distance to the center of gravity of $M$.

Here we make one further step: we give definitions and prove statements based on which, for polyhedra $P$ defined by sufficiently
dense discretizations, the number and type of maxima and minima of $M$ can be determined and the number of saddles may be obtained via equation
(\ref{Poincare}). These results, defining the \emph{number of flocks} on a dense discretization $P$ corresponding to a smooth surface $M$, 
may help to bridge the gap between hand experiments on pebbles,
identifying the equilibria of $M$ and 3D computer scans, identifying the equilibria of $P$ \cite{Domokosetal}.
We formulate our results for general functions in two variables, however, all results are valid for convex surfaces interpreted as the distance function
measured form the center of gravity. We also formulate the results for functions in one variable (2D convex shapes) where the statements are rather simple.

\section{Main results}

We start with some preliminary assumptions and definitions.

Let $f : [0,a] \times [0,b] \to \Re$ be a $C^3$-class function.
Consider a division $D_n$ of the rectangle $D=[0,a] \times [0,b]$ into $n \times n$ congruent rectangles.
We call the vertices of these rectangles \emph{grid vertices}, and denote the grid vertex $\left( \frac{i}{n}a, \frac{j}{n}b \right)$ by $p_{i,j}$.
The \emph{neighbors} of the grid vertex $p_{i,j}$ are the four grid vertices $p_{i \pm 1,j}$ and $p_{i,j \pm 1}$.
The two pairs $p_{i \pm 1,j}$ and $p_{i,j \pm 1}$ are called \emph{opposite neighbors} of $p_{i,j}$.

Recall that a point $p \in R$ is called a \emph{stationary} point of $f$, if $f'_x(p) = f'_y(p) = 0$.

\begin{defn}
A grid vertex $p$ is \emph{stationary}, if for any opposite pair $\{ q, q' \}$ of its neighbors, $f(p) \geq \max \{ f(q), f(q') \}$ or 
$f(p) \leq \min \{ f(q), f(q') \}$ is satisfied.
\end{defn}

If $p_{i,j}$ is a grid vertex, then the \emph{grid circle of centre $p_{i,j}$ and radius $r$} is the set
\[
C_r(p_{i,j}) = \{ p_{l,m} : \max \{ |l-i|, |m-j| \} \leq r \}.
\]

During the consideration, we assume that $f$ has finitely many stationary points, each in the interior of the domain $D$, and the determinant of the Hessian of $f$ at each of them is not zero.
We assume that the grids we use are nondegenerate; more specifically, that if $p \neq p'$ are two grid vertices, then $f(p) \neq f(p')$.

\begin{thm}\label{thm:auxiliary}
Let $p=(x_0,y_0) \in \inter D$.
\begin{enumerate}
\item[(1)] If $p$ is \emph{not} a stationary point of $f$, then $p$ has a neighborhood $U \subset D$ such that for any $n \geq 1$, if the grid vertex $p_{i,j}$ of $D_n$, and each of its neighbors, is contained in $U$, then $p_{i,j}$ is not a stationary grid vertex.
\item[(2)] If $p$ is a local minimum of $f$, then $p$ has a neighborhood $U$ and some suitable value of $r$ such that for every sufficiently large $n$, there is exactly one grid vertex $p_{i,j}$ of $D_n$ in $U$, which is minimal within its grid circle $C_r(p_{i,j})$. 
\item[(3)] If $p$ is a local maximum of $f$, then $p$ has a neighborhood $U$ and some suitable value of $r$ such that for every sufficiently large $n$, there is exactly one grid vertex $p_{i,j}$ of $D_n$ in $U$, which is maximal within its grid circle $C_r(p_{i,j})$.
\item[(4)] If $p$ is a saddle point of $f$, then $p$ has a neighborhood of and some suitable value of $r$ such that for every sufficiently large $n$, any grid vertex $p_{i,j}$ of $D_n$ in $U$ is neither a local maximum, nor a local minimum within its grid circle $C_r(p_{i,j})$.
\end{enumerate}
\end{thm}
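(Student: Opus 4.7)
Part (1) is a direct consequence of the mean value theorem: if $\nabla f(p)\ne 0$ then without loss of generality $f'_x$ has constant nonzero sign on a neighborhood $U$ of $p$, and for any grid vertex $p_{i,j}$ whose horizontal opposite pair $\{p_{i-1,j},p_{i+1,j}\}$ lies in $U$, the two one-sided increments share this sign, so $f(p_{i,j})$ lies strictly between $f(p_{i-1,j})$ and $f(p_{i+1,j})$ and the grid stationarity condition fails on this pair.

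For parts (2)--(4) the starting point is the Taylor expansions
\[
f(q)-f(p)=\tfrac12 (q-p)^\top H(q-p)+O(|q-p|^3),\qquad \nabla f(q)=H(q-p)+O(|q-p|^2),
\]
with $H=D^2 f(p)$ nonsingular, valid uniformly on a sufficiently small neighborhood $U$ of $p$. I would pick a constant $C_1$ (to be taken large enough in terms of $H$, $a$, $b$, and the $C^3$-bounds of $f$) and split the grid vertices in $U$ into the \emph{near set} $F_n=\{q:|q-p|\le C_1/n\}$, of cardinality $O(1)$, and the \emph{far set} $U\setminus F_n$. On the far set $|\nabla f(q)|\ge c\,|q-p|$ for a positive constant $c$, so the larger of $|f'_x(q)|,|f'_y(q)|$ is of order at least $1/n$, and its one-step linear contribution in $f(q\pm a e_1/n)-f(q)$ or $f(q\pm b e_2/n)-f(q)$ dominates the $O(1/n^2)$ quadratic correction once $C_1$ is taken large enough; hence every far vertex has both a strictly smaller and a strictly larger neighbor, and so is neither a local max nor a local min on any $C_r(q)$.

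For Parts (2)--(3), with $H$ definite, the nondegeneracy assumption on the grid singles out a unique $f$-minimizer (respectively, maximizer) $q^*$ on $F_n$, and one checks $|q^*-p|=O(1/n)$. Fixing $r\ge 2C_1/\min(a,b)$ makes $F_n$ entirely contained in $C_r(q)$ for every $q\in F_n$, so any $q\in F_n\setminus\{q^*\}$ is disqualified by $q^*$ itself, while any far vertex in $C_r(q^*)$ satisfies $f(q)-f(p)\ge (\lambda_{\min}/3)(C_1/n)^2$, which exceeds the quadratic upper bound for $f(q^*)-f(p)$ once $C_1$ has been chosen large; thus $q^*$ is the unique grid-circle extremum in $U$. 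For Part (4), the indefiniteness of $H$ produces integer vectors $(k_\pm,l_\pm)$ of bounded size with $v_+^\top H v_+>0>v_-^\top H v_-$ for $v_\pm=(k_\pm a/n,l_\pm b/n)$; substituting into the second-difference identity $f(q+v)+f(q-v)-2f(q)=v^\top H v+O(1/n^3)$ produces both a strictly larger and a strictly smaller vertex in $C_r(q)$ for every $q\in F_n$, once $r\ge\max_\pm\max(|k_\pm|,|l_\pm|)$, while the far vertices were already handled above.

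The delicate step throughout is the uniform control of the cubic Taylor remainder against the $\Theta(1/n^2)$ separations produced by the Hessian quadratic form: the nondegeneracy of the grid is needed precisely to prevent ties at the Hessian level, so that the sign of $f(q)-f(q^*)$ on $F_n$ is dictated by the leading quadratic term for every sufficiently large $n$.
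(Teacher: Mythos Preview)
Your argument for (1) is essentially the paper's: both use the mean value theorem to force strict monotonicity along one axis direction on a neighborhood where the corresponding partial derivative keeps a fixed sign.

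For (2)--(4) your route is genuinely different from the paper's. The paper argues geometrically: at a nondegenerate minimum it traps the level curves $\{f=\alpha\}$ between the homothetic ellipses $P_2=(1\pm\varepsilon)\alpha$, takes the first grid vertex $\bar p$ hit as $\alpha$ grows, and then rules out any competing vertex $q$ by a curvature argument (Blaschke's rolling ball theorem) showing that a disk of the grid-diagonal diameter fits inside the inner ellipse near $q$, hence contains a grid vertex of strictly smaller value inside $C_r(q)$; the resulting $r$ is given explicitly in terms of $\lambda_2/\lambda_1$, $a$, $b$ and $\varepsilon$. For (4) the paper uses the angular-sector structure of the level set $\{f=f(p)\}$ near a saddle. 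Your approach replaces all of this by a uniform Taylor estimate and the near/far split at scale $C_1/n$: on the far set the gradient lower bound $|\nabla f(q)|\gtrsim|q-p|$ forces a strictly smaller and a strictly larger axis-neighbor, while on the $O(1)$-point near set you compare directly (for (2)--(3)) or use a second-difference identity along lattice directions in the positive/negative cones of $H$ (for (4)). Both approaches are correct; yours is more elementary and purely analytic, avoiding the ellipse geometry and Blaschke's theorem, while the paper's argument yields a more explicit, geometrically interpretable choice of $r$ and ties the picture to the shape of the level curves. One small remark: your closing sentence overstates the role of grid nondegeneracy---it is used only to guarantee a \emph{unique} minimizer $q^*$ on $F_n$, not to control the cubic remainder, which is handled by the $C^3$ bound and the choice of $C_1$.
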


\begin{proof}
First, we prove (1).
Let $L$ be the line through the origin, perpendicular to $\grad f(p)$. Note that the derivative of $f$ is zero in this direction.
Let $\varepsilon > 0$ be sufficiently small, and $A$ be the union of the lines, through $p$, the angles of which with $L$ is not greater than $\varepsilon$.
Note that by the continuity of $\grad f$, $p$ has a neighborhood $U$ such that for any $q \in U$, $\grad f (q)$ is perpendicular to some line in $A$.
This implies that if $q \in U$, and $A$ contains no line parallel to the vector $u$, then $f'_u(q) \neq 0$. Without loss of generality, we may assume that $U$
is a Euclidean disk in $\Re^2$.

Now, consider any division $D_n$, and assume that the grid vertex $p_{i,j}$ and all its neighbors are contained in $U$.
Since $\varepsilon > 0$ be sufficiently small, the $x$-axis, or the $y$-axis is not parallel to any line in $A$.
Without loss of generality, let the $x$-axis have this property.
We show that the sequence $f(p_{i-1,j}), f(p_{i,j})$ and $f(p_{i+1,j})$ is strictly monotonous.
Indeed, if, for example, $f(p_{i,j}) \geq \max \{ f(p_{i-1,j}), f(p_{i+1,j})\}$, then by the Lagrange Theorem, for some $q_1,q_2 \in U$, we have
$f'_x(q_1) \leq 0 \leq f'_x(q_2)$, which, by the continuity of $f_x$, yields that  for some $q \in U$, we have $f'_x(q) = 0$.
Nevertheless, it contradicts the definition of $A$.
If $f(p_{i,j}) \leq \min \{ f(p_{i-1,j}), f(p_{i+1,j})\}$, we can reach a contradiction in a similar way.
Thus, $p_{i,j}$ is not a stationary grid vertex.

In the next part, we prove (2). Without loss of generality, assume that $f(p)=0$.
Note that since $p$ is a local minimum, both eigenvalues $\lambda_1 \leq \lambda_2$ of the Hessian of $f$ at $p$ are positive.
Let $P_2$ denote the second order Taylor polynomial of $f$ around $p$.
Then $P_2$ is a quadratic form with eigenvalues $\frac{\lambda_1}{2} > 0$ and $\frac{\lambda_2}{2} > 0$, and the curve $P_2 = 1$ is an ellipse.
Now, since $f$ is $C^3$-class, there is some $\bar{L} \in \Re$ such that for every $(x,y) \in D$, we have
\[
| f(x,y)-P_2(x,y) | < \frac{\bar{L}}{\sqrt{2}} \left( |x|^3 + x^2 |y| + |x| y^2 + |y|^3 \right) = \frac{\bar{L}}{\sqrt{2}} \left( |x| + |y| \right) \left( x^2 + y^2 \right) \leq L \left( x^2 + y^2 \right)^{3/2},
\]
which yields that for some suitable $L \in \Re$, we have $| f(q)-P_2(q) | \leq \left( P_2(q) \right)^{3/2}$ for every $q \in D$.

Let $\varepsilon > 0$ be sufficiently small. Choose a neighborhood $U$ of $p$ such that
\begin{itemize}
\item for every $q \in U$, we have $f(q) > 0$, and $| f(q)-P_2(q)| < \varepsilon P_2(q)$,
\item $f$ is convex in $U$.
\end{itemize}
Observe that the second condition holds for any convex neighborhood of $p$, where the Hessian of $f$ has only positive eigenvalues, and, the existence of such a neighborhood follows from the fact that $f$ is $C^3$-class.
Now, since $P(q)$ is homogeneous, every point $q \in D$, with $f(q)=\alpha$, is contained between the ellipses $P_2(q) =  (1-\varepsilon) \alpha$ and $P_2(q) =  (1+\varepsilon) \alpha$. Note that if $\varepsilon$ is sufficiently small, for any value of $\alpha$ and any point $q$ of the level curve $f(x,y)=\alpha$, the angle between the two tangent lines of the ellipse $P_2(x,y) =  (1-\varepsilon) \alpha$, passing through $q$, is at least $\frac{\pi}{3}$.

Fix any division $D_n$, and consider the level curves $f(x,y) = \alpha$, as $\alpha \geq 0$ increases. Let $\bar{p}$ be the first grid vertex that reaches the boundary of such a curve. Clearly, $f(\bar{p}$ is minimal among all the grid vertices in $U$.
Let
\begin{equation}\label{eq:r}
r \geq \max \left\{ \frac{3\sqrt{a^2+b^2}}{2 \min \{ a,b\}}, \frac{\lambda_2}{\lambda_1} \frac{\sqrt{a^2+b^2}}{\min \{ a,b\}} \sqrt{\frac{1+\varepsilon}{1-\varepsilon}} \right\} ,
\end{equation}
where $\tau = \frac{\lambda_2}{\lambda_1} \geq 1$ is the ratio of the two eigenvalues of the Hessian of $f$ at $p$.
In the remaining part of the proof of (2), we show that there is no other grid vertex in $U$ which is minimal within its grid circle of radius $r$.

Assume, for contradiction, that the grid vertex $q$ is minimal within $C_r(q)$, and let $f(q)=\beta$. Then the level curve $f(x,y)=\beta$ already
contains some grid vertex $q'$ in its interior. Note that the semi-axes of the ellipse $P_2(x,y)=t$ are of length $\sqrt{\frac{2t}{\lambda_i}}$, where $i=1,2$.
Recall that the curve $f(x,y)=\beta$ is contained in the ellipse $P_2(x,y) = (1+\varepsilon) \beta$, and the diameter of the latter curve is $2\sqrt{\frac{2(1+\varepsilon)\beta}{\lambda_1}}$.
Since, according to our assumption, $q'$ is contained in the interior of $P_2(x,y) = (1+\varepsilon) \beta$, and $f(q') < f(q)$, we obtain that
\begin{equation}\label{eq:whatweknow}
r \delta < 2\sqrt{\frac{2(1+\varepsilon)\beta}{\lambda_1}},
\end{equation}
where $\delta = \min \left\{ \frac{a}{n}, \frac{b}{n} \right\}$ denotes the minimal distance between any two grid vertices.

Let $w$ be the point of $P_2(x,y) = (1-\varepsilon) \beta$ closest to $q$.
Let $\Delta = \frac{\sqrt{a^2+b^2}}{n} = \frac{\sqrt{a^2+b^2}}{\min \{ a,b\}} \delta$, and observe that any circle of diameter $\Delta$ contains a grid vertex.
We show that the circle $C$ of diameter $\Delta$, touching the ellipse $P_2(x,y) = (1-\varepsilon) \beta$ at $w$ from inside, is contained in the ellipse.
By Blaschke's Rolling Ball Theorem, to do this it suffices to show that $\frac{\Delta}{2}$ is not greater than any radius of curvature of the ellipse.
It is a well-known fact that the radius of curvature at any point of an ellipse with semi-axes $M \geq m$ is at least $\frac{m^2}{M}$ and at most $\frac{M^2}{m}$.
Thus, a simple computation yields that what we need to show is
\begin{equation}\label{eq:whatweneed}
\Delta \leq 2 \frac{\sqrt{2(1-\varepsilon) \beta \lambda_1}}{\lambda_2}.
\end{equation}
To show (\ref{eq:whatweneed}), we can combine (\ref{eq:whatweknow}) with the definition of $r$ in (\ref{eq:r}).

Let $\bar{C}$ be the circle of radius $\Delta$ that touches the tangent lines of the ellipse $P_2(x,y) = (1-\varepsilon) \beta$ through $q$. 
Since $f$ is convex in $U$, the level curve $f(x,y)=\beta$ is also convex, and thus, this circle is also contained inside the level curve $f(x,y)=\beta$.
On the other hand, $\bar{C}$ as any other circle of diameter $\Delta$, contains a grid vertex $q''$. Then, our previous observation yields that $f(q'') < \beta = f(q)$. To finish the proof, we show that $\bar{C}$ is contained in the circle of radius $r \delta$, centered at $q$, which implies that $q''$ is contained in the grid circle of radius $r$, centered at $q$.

Assume, for contradiction, that it is not so. Let $\phi$ be the angle between the two tangent lines of the ellipse $P_2(x,y)=(1-\varepsilon) \beta$, through $q$.
Since the angle between these two tangent lines is at least $\frac{\pi}{3}$, a simple computation yields that the distance of the centre of $\bar{C}$ and
$q$ is at most $\Delta$, and hence no point of $\bar{C}$ is farther from $q$ than $\frac{3}{2} \Delta = \frac{3\sqrt{a^2+b^2}}{2 \min \{ a,b\}} \delta \leq r \delta$, which finishes the proof of (2).

To prove (3), we can apply (2) for the function $-f$.

Finally, we prove (4). Let $f(p) = 0$. Then, in a neighborhood $U$ of $q$, the set $\{ f(q) = 0 \}$, $q \in U$ can be decomposed into the union of two $C^2$-class curves, crossing each other at $q$, and for any $\alpha \neq 0$, the set $\{ f(q) = \alpha \}$, $q \in U$ is the union of two disjoint, $C^2$-class curves.
Furthermore,  if $U$ is sufficiently small, there is some sufficiently small $\phi>0$ and $\varepsilon > 0$ such that for any $q \in U$
\begin{itemize}
\item there is a closed angular domain $A$ with apex $q$ and angle $\phi$ such that for any point $q' \in A$ with $0 < |q'-q| < \varepsilon$, we have $f(q) < f(q')$;
\item there is a closed angular domain $B$ with apex $q$ and angle $\phi$ such that for any point $q' \in B$ with $0 < |q'-q| < \varepsilon$, we have $f(q) > f(q')$.
\end{itemize}
Clearly, for a sufficiently large $r$ (chosen independently of $q$), any such closed angular domain in $U$ contains a vertex of $C_r(q)$, which yields the assertion.
\end{proof}

\begin{thm}\label{thm:main}
Le $f$ have $s$ local minima and $u$ local maxima. Then there is some $r$ such that for any sufficiently large $n$, exactly $s$ grid vertices of $D_n$ are minimal, and exactly $u$ grid vertices of $D_n$ are maximal within their grid circles of radius $r$.
\end{thm}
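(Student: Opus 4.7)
The plan is to combine the four parts of Theorem \ref{thm:auxiliary} via a compactness argument. Let the stationary points of $f$ be enumerated as $\pi_1,\dots,\pi_k$, split into $s$ local minima, $u$ local maxima, and the remaining saddles. For each $\pi_i$, apply part (2), (3) or (4) of Theorem \ref{thm:auxiliary}, according to its type, to obtain an open neighborhood $U_i$ and a radius $\rho_i$ with the stated properties. Shrinking the $U_i$ if necessary, we may assume they are pairwise disjoint and have compact closures inside $\inter D$. The complement $K = D \setminus \bigcup_i U_i$ is then compact and contains no stationary point, so by part (1) of Theorem \ref{thm:auxiliary} every $q \in K$ admits a neighborhood $V_q$ on which, for every $n$, any grid vertex of $D_n$ whose neighbors all lie in $V_q$ is non-stationary. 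Extract a finite subcover $V_{q_1},\dots,V_{q_m}$ of $K$.

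Set $r = \max_i \rho_i$. The key observation is that enlarging the grid circle only strengthens the condition of being minimal (resp.\ maximal) within it: since $C_{\rho_i}(p) \subseteq C_r(p)$, a grid vertex non-extremal within $C_{\rho_i}$ is a fortiori non-extremal within $C_r$. Hence the non-extremality conclusions in parts (1) and (4) of Theorem \ref{thm:auxiliary} persist with the uniform radius $r$. Moreover, the unique minimizer $\bar{p}_i$ produced by part (2) near a local minimum $\pi_i$ is characterized in its proof as the grid vertex realizing the minimum of $f$ over all grid vertices of $U_i$; for $n$ large enough the Euclidean diameter of $C_r(\bar{p}_i)$, bounded by $r\sqrt{a^2+b^2}/n$, is smaller than the distance from $\bar{p}_i$ to $\bd U_i$, so $C_r(\bar{p}_i) \subset U_i$ and $\bar{p}_i$ indeed remains the unique minimizer within the larger grid circle $C_r(\bar{p}_i)$. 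Maxima are handled symmetrically via part (3).

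Choose $n$ large enough that simultaneously (i) every grid vertex lying in some $V_{q_j}$ has its four neighbors in the same $V_{q_j}$, (ii) each extremizer's grid circle $C_r(\bar{p}_i)$ is contained in its $U_i$, and (iii) each of the finitely many ``sufficiently large $n$'' thresholds from Theorem \ref{thm:auxiliary} is met. For such $n$ the bookkeeping is immediate: each $U_i$ around a local minimum contributes exactly one grid vertex minimal within $C_r$, each $U_i$ around a local maximum contributes exactly one maximal one, each $U_i$ around a saddle contributes neither (part (4) together with the monotonicity above), and each $V_{q_j}$ contributes neither (the vertex is not stationary, hence certainly not extremal within any grid circle). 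Since the $U_i$ and $V_{q_j}$ together cover $D$ and the $U_i$ are pairwise disjoint, the global totals are exactly $s$ minimal and $u$ maximal grid vertices, as claimed.

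The main obstacle is essentially bookkeeping: consolidating the finitely many $n$-thresholds from the applications of Theorem \ref{thm:auxiliary} together with the purely geometric conditions (i)--(ii) into a single working threshold. A minor subtlety is the meaning of ``minimal within $C_r(p)$'' when $p$ lies near $\bd D$ and its grid circle may partially fall outside the grid; but since $\bd D$ carries no stationary point, such vertices lie inside some $V_{q_j}$ for $n$ large and are disposed of by the non-stationarity conclusion of part (1), so no separate argument is needed.
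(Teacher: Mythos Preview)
Your proof is correct and follows essentially the same strategy as the paper's: apply the four cases of Theorem~\ref{thm:auxiliary} locally, use compactness to patch the local neighborhoods into a cover of $D$, and then take $n$ large enough that all of the finitely many thresholds are met simultaneously. Your version is in fact more explicit than the paper's about why a single radius $r=\max_i\rho_i$ can be used uniformly (via the monotonicity of the extremality condition in $r$ and the global-minimum characterization of $\bar{p}_i$ extracted from the proof of part~(2)).
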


\begin{proof}
Fix some $r$ such that any stationary point $q$ of $f$ has some neighborhood that satisfies the corresponding conditions in (2), (3) or (4) of Theorem~\ref{thm:auxiliary}.
Observe that we can choose $\varepsilon_1,\varepsilon_2>0$ such that
\begin{itemize}
\item if $q$ is a stationary point, the assertion in (2), (3) or (4) Theorem~\ref{thm:auxiliary} holds in the $\varepsilon_1$-neighborhood $U_q$ of $q$;
\item if $q$ is not a stationary point, and its distance from any stationary point is at least $\varepsilon_1$, then (1) holds in the $\varepsilon_2$-neighborhood $U_q$ of $q$.
\end{itemize}

Now, let $n$ be large enough such that for any point $q \in D$, $C_r(q) \subset U(q)$, and for any stationary point, (2), (3) and (4) can be applied, and then, the theorem follows.
\end{proof}

\begin{rem}
Note that we may apply the following theorem for a parametrized convex surface $r=r(u,v)$, with $x,y$ as $(u,v)$, and $z=f(x,y)$ as the distance function $\| r(u,v)\|$.
\end{rem}

Note that the one-dimensional case of the problem is straightforward. More specifically, the following holds.

\begin{rem}\label{rem:2D}
Let $f : [a,b] \to \Re$ be a $C^2$-class function with finitely many stationary points, each in the interval $(a,b)$, such that the second derivative of $f$ at each such point is not zero. Let $x_1, x_2 ,\dots, x_k$ denote the local minima, and $x'_1, x'_2, \ldots, x'_l$ denote the local maxima of $f$. Let $P = \{ p_1,p_2,\ldots, p_{n-1} \}$ denote the set of the vertices of the equidistant $n$-element partition of $[a,b]$, contained in $(a,b)$, and assume that for any $i=0,1,\ldots,n-1$, $f(p_i) \neq f(p_{i+1})$. Then, if $n$ is sufficiently large, there are exactly $k$ local minima, and $l$ local maxima in $P$.  
\end{rem}

\section{Applications}

Our results may help to relate hand experiments on pebbles to the results of 3D computer scans. The latter identify the exact convex hull as  a multi-faceted polyhedron
(often with several thousand faces) and locate the equilibrium points of this polyhedron. As predicted by \cite{DLS}, these appear in flocks and the number and
type of equilibria observed \emph{inside} each flock appears to be well approximated by the numbers predicted in \cite{DLS}. Hand experiments, on the other hand,
tend to identify each flock as one single equilibrium point, associated with an (imaginary) smooth surface. In \cite{Domokosetal} we introduced a `fudge' parameter
$\mu$ describing the uncertainty of hand experiments, $\mu=0$ corresponding to the exact measurement which is identical to the computer output.

When plotting the number $T$ of equilibria versus $\mu$ we observed that after a steep initial drop, the plot has  a long plateau extending often over several orders of
magnitude of $\mu$ and the function value $T^{\star}$ of this plateau we associated with the number of equilibria of the (imaginary) smooth surface. Our current note
gives an independent definition for this number. Comparing those values may not only shed light on the applicability of our `fudge' parameter but also
may significantly contribute to the evaluation of geological field experiments.


\begin{thebibliography}{99}

\bibitem{Archimedes1}T. I. Heath (ed.) \emph{The Works of Archimedes}, Cambridge University Press, 1897.
\bibitem{Arnold} Arnold, V.I.,
	\emph{Ordinary differential equations}, 10th printing, MIT Press, Cambridge, 1998.
\bibitem{Cayley} Cayley, A., \emph{On contour and slope lines}, Phi. Mag., \bf 18 \rm (1859), 264-268.
\bibitem{Conway} Conway, J.H. and Guy, R.,
	\emph{Stability of polyhedra}, SIAM Rev  {\bf 11 }(1969), 78-82.
\bibitem{Dawson} Dawson, R.,
	\emph{Monostatic Simplexes}, Amer. Math. Monthly {\bf 92 }(1985), 541-546.
\bibitem{DawsonFinbow} Dawson, R. and Finbow, W.,
	\emph{What shape is a loaded die?}, Mathematical Intelligencer {\bf 22 }(1999), 32-37.
\bibitem{DLS} Domokos G., L\'angi Z., Szab'o T., \emph{On the equilibria  of finely discretized curves and surfaces} Monatshefte für Mathematik \bf 168 \rm (2011), 321-345.
\bibitem{Domokos1} Domokos, G., Papadopoulos, J. , Ruina A., \emph{ Static equilibria of rigid bodies: is there anything new?},
	J. Elasticity {\bf 36} (1994), 59-66.
\bibitem{Domokosetal} Domokos G., Sipos A.A., Szabo, T., Varkonyi, P.L.,
 	\emph{Pebbles, shapes and equilibria}, Math. Geosci. {\bf 42 }(2010), 29-47.
\bibitem{Euler} Euler, L., \emph{Elementa doctrinae solidorum.Demonstratio nonnullarum insignium proprietatum, quibus solida hedris planis inclusa sunt praedita},
	Novi comment acad. sc. imp. Petropol {\bf 4} (1752), 109-160.
\bibitem{Heppes} Heppes, A.,
	\emph{A double-tipping tetrahedron}, SIAM Rev. {\bf 9 }(1967), 599-600.
\bibitem{Maxwell} Maxwell, J.C.,
	\emph{On Hills and Dales}, Phi Mag, {\bf 40} (1870), 421-427.
\bibitem{Morse} Morse, M.,
	\emph{What is analysis in the large?}, Amer. Math. Monthly {\bf 49}(1942), 358-364.
	\bibitem{Reshetov} Reshetov, A., \emph{A unistable polyhedron with 14 faces} Int. J. Comput. Geom. Appl., {\bf 24} (2014) 39 
\bibitem{VarkonyiDomokos} Varkonyi, P.L., Domokos G.,
	\emph{Static equilibria of rigid bodies: dice, pebbles and the Poincar\'e-Hopf Theorem}, J. Nonlinear Science {\bf 16 }(2006), 255-281.
\bibitem{Zamfirescu} Zamfirescu, T., \emph{How do convex bodies sit?}, Mathematica {\bf 42}(1995), 179-181.
\end{thebibliography}
\end{document}